\numberwithin{equation}{section}
\theoremstyle{plain}
\newtheorem{theorem}[equation]{Theorem}
\newtheorem{lemma}[equation]{Lemma}
\theoremstyle{definition}
\newtheorem{definition}[equation]{Definition}
\newtheorem{properties}[equation]{Properties}
\theoremstyle{remark}
\newtheorem{remark}[equation]{Remark}
\newcommand{\I}{\operatorname{I}}
\newcommand{\II}{\operatorname{II}}
\newcommand{\rn}{\mathbb{R}^n}
\begin{document}

\title[A local $Tb$ theorem for square functions in domains with ADR boundaries]{A local $Tb$ theorem for square functions in domains with Ahlfors-David regular boundaries}

\author{Ana Grau de la Herran}
\address{DEPARTMENT OF MATHEMATICS, UNIVERSITY OF MISSOURI, COLUMBIA}
\email{ag432@mail.missouri.edu}
\author{Mihalis Mourgoglou}
\address{UNIV. PARIS-SUD 11, LABORATOIRE DE MATH\'{E}MATIQUES, UMR 8628, F-91405 {\sc ORSAY}; FMJH, F-91405 {\sc ORSAY}}
\email{mihalis.mourgoglou@math.u-psud.fr}

\begin{abstract} We prove a “local” $Tb$ Theorem for square functions, in which we assume $L^p$ control of the pseudo-accretive system, with $p > 1$ extending the work of S. Hofmann to domains with Ahlfors-David regular boundaries. 
\end{abstract}

\maketitle

\tableofcontents

\section{Introduction, statement of results, history\label{s1}}

The $Tb$ Theorems are boundedness criteria for singular integrals which show that a singular integral operator $T$ is $L^2$ bounded if it has sufficiently good behavior on some suitable non-degenerate test function $b$. Such theorems were first proved in $\rn$ by McIntosh and Meyer in \cite{McM}, and David, Journ\'{e} and Semmes in \cite{DJS}. A “local $Tb$ theorem” is a variant of the standard $Tb$ theorem, in which instead of controlling the action of the operator $T$ on a single, globally defined accretive test function $b$, it suffices to control locally, on each dyadic cube $Q$, the action of $T$ on a test function $b_Q$, which satisfies some uniform, scale invariant $L^p$ bound along with the non-degeneracy condition
$$\frac{1}{C} \leq |Q|^{-1} \left|\int_Q b_Q \right|,$$
for some uniform constant $C>0$. M. Christ proved in \cite{Ch} the first local $Tb$ theorem, in which the local test functions are assumed to belong uniformly to $L^{\infty}$; an extension of Christ's result to the non-doubling setting is due to Nazarov, Treil and Volberg \cite{NTV} and Hyt\"{o}nen and Martikainen \cite{HyM}. For doubling measures, one can also consider more general $L^p$ type testing conditions introduced by Auscher, Hofmann, Muscalu, Tao and Thiele \cite{AHMMT},
and further studied by Hofmann \cite{H2}, Auscher and Yang \cite{AY}, Auscher and Routin \cite{AR}, Hyt\"{o}nen and Martikainen \cite{HyM} and Tan and Yan \cite{TY}.

It is also interesting to consider local Tb theorems for square functions (as opposed to singular integrals). In the Euclidean setting the result obtained in our work was presented in \cite{H1}, but was already implicit in the solution of the Kato problem \cite{HMc}, \cite{HLMc}, \cite{AHLMcT}, (see also \cite{AT} and \cite{Se} for related results), and more recently has found application to variable coefficient layer potentials \cite{AAAHK}. Our result is applicable to problems that connect the behavior of the harmonic measure for domains with uniformly rectifiable boundaries (see \cite{HMar} and \cite{HMUT}).

\subsection{Acknowledgements} 

We would like to thank Steve Hofmann for suggesting this problem to us, for our discussions with him that illuminated the dyadic analysis part of the paper and in general, for his constant support. The second named author has benefited from a two-year Sophie Germain International post-doctoral scholarship in Fondation de Math\'{e}matiques Jacques Hadamard (FMJH) and would like to thank Universit\'{e} Paris-Sud 11, Orsay for its hospitality. This work started when the second named author was a research assistant under Steve Hofmann in the Department of Mathematics at the University of Missouri, Columbia.

\section{Notation and Preliminaries}

\begin{list}{\labelitemi}{\leftmargin=1em}
\item We shall use the letters $c$, $C$ to denote positive constants, not necessarily the same at each occurrence, which depend only on dimension and the constants appearing in the hypotheses of the theorems. We shall also write $A \lesssim B$ and $A \approx B$ to mean, respectively, that $A \leq C B$ and $0 < c \leq A/B \leq C$, where the constants $c$ and $C$ are as above, unless explicitly noted.
\item Given a domain $\Omega \subset \mathbb{R}^{n+1}$, we shall use lower case letters $x$, $y$, $z$, etc., to denote points on $\partial \Omega$, and capital letters $X$, $Y$, $Z$, etc., to denote generic points in $\mathbb{R}^{n+1}$ (especially those in  $\mathbb{R}^{n+1} \setminus \partial \Omega$).
\item For a Borel set $A \subset \mathbb{R}^{n+1}$, we let $1_A$ denote the usual indicator function of $A$, i.e. $1_A(x) = 1$ if $x \in A$, and $1_A(x) = 0$ if $x \notin A$.
\item We let $\sigma$ be the restriction of the $n$-dimensional Hausdorff measure to $\partial \Omega$, i.e., $\sigma:=\mathcal{H}^n\mid_{\partial \Omega}$.
\item For $X \in \mathbb{R}^{n+1}$, we set $\delta(X) := dist(X, \partial \Omega)$.
\item The open $(n+1)$-dimensional Euclidean ball of radius $r$ will be denoted $B(x,r)$ when the center $x$ lies on $\partial \Omega$, or $B(X,r)$ when the center $X \in \mathbb{R}^{n+1} \setminus \partial \Omega$. A “surface ball” is denoted $\Delta(x,r) := B(x,r) \bigcap \partial \Omega$.
\end{list}

\begin{definition}
An $n$-dimensional set $E\subseteq\mathbb{R}^{n+1}$ is {\tt Ahlfors-David regular} and we write $E \in ADR$, if $E$ is closed and for every ball $B(x,r)$ centered at $x \in E$ and of radius $r \in \left(0,diam(E)\right)$,
\begin{equation} C^{-1}r^n \leq \sigma(B(x,r))\leq C r^n,\label{Not.ADreg}\end{equation} for some positive constant $C$.
\end{definition}

\begin{remark}

Throughout the paper we shall consider $\Omega=\mathbb{R}^{n+1}\setminus E$, $E=\partial \Omega\in ADR$. Observe that $\Omega$ need not be connected.
\end{remark}

The following lemma shows that for $ADR$ spaces or even for spaces for homogeneous-type one can build a family of subsets of $E$ that play the same role that dyadic cubes do in the Euclidean space. For the construction of such ``cubes" see for example \cite{Ch}, \cite{D1}, \cite{HytK} and \cite{AHyt}.

\begin{lemma}[``{\bf Dyadic grid}" {\bf in $ADR$ spaces}]\label{Not.Dyad.Cubes} Let $E\subset \mathbb{R}^{n+1}$ be in $ADR$. Then there exist constants $\alpha_0 > 0$, $\eta > 0$ and $C_1, C_2 < \infty$, depending only on dimension and the $ADR$ constants, such that for each $k\in \mathbb{Z}$, there is a collection of Borel sets (``cubes")
\begin{equation*}\mathbb{D}_k:= \{ Q_j^k \subset E:j \in \mathcal{I}_k \},\end{equation*}
where $\mathcal{I}_k$ denotes some (possibly finite) index set depending on $k$, satisfying
\begin{itemize}
\item[(1)] $ \ E = \displaystyle\bigcup_{j \in \mathcal{I}_k} Q_j^k$ for each $k \in \mathbb{Z}$.

\item[(2)] If $k < k'$, then either $Q^{k'}_{\ell} \bigcap Q^k_j=\emptyset$ or $Q^{k'}_{\ell} \subset Q^{k}_{j}$, $\forall \ell\in\mathcal{I}_{k'}$, $\forall j\in\mathcal{I}_k$.
\item[(3)] For each $(l,k')$ and each $k<k'$, there is a unique $j$ such that $Q^{k'}_{\ell} \subset Q^{k}_{j}$.
\item[(4)] If $Q^k_j\in\mathbb{D}_k$ then 
\begin{align*} C_1^{-1}2^{-k} &\leq diam(Q^k_j)\leq C_1 2^{-k}\\
C_1^{-1}2^{-kn} &\leq \mathcal{H}^n(Q^k_j)\leq C_1 2^{-kn}.\end{align*}
\item[(5)] Each $Q^k_j$ contains some ``surface ball" $\Delta(x^k_j,\alpha_0 2^{-k}):=B(x^k_j,\alpha_0 2^{-k}) \bigcap E$.
\item[(6)] For all $k, j$ and for all $\tau \in (0,\alpha_0)$,
$$\mathcal{H}^n\left( \left\{x \in Q^k_j:dist(x,E \setminus Q^k_j)\leq \tau 2^{-k} \right\}\right) \leq C_2 \tau^{\eta}\mathcal{H}^n(Q^k_j).$$
\end{itemize}
\end{lemma}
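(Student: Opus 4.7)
My plan is to follow the construction of M. Christ in \cite{Ch}, adapted to the $ADR$ setting. First, for each $k \in \mathbb{Z}$ I fix a maximal set of points $\{x_j^k\}_{j \in \mathcal{I}_k} \subset E$ that are mutually $2^{-k}$-separated; maximality ensures that the surface balls $\Delta(x_j^k, 2^{-k})$ cover $E$, while the $ADR$ hypothesis bounds the local cardinality $\#(\mathcal{I}_k \cap B(x,r))$ in the expected way. I then define a parent map $\pi_k : \mathcal{I}_{k+1} \to \mathcal{I}_k$ by sending each $x_\ell^{k+1}$ to a nearest $x_j^k$, breaking ties with a fixed well-ordering; iterating $\pi_k$ gives, for each $(k',\ell)$ with $k' > k$, a unique ancestor $(k,j)$, which is exactly what property $(3)$ asks for.

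Next, I define $Q_j^k$ as the closure in $E$ of $\bigcup_{k' \geq k}\bigcup_{\ell} \Delta(x_\ell^{k'}, \alpha_0 2^{-k'})$, where the inner union runs over all descendants $(k',\ell)$ of $(k,j)$ and $\alpha_0$ is a small dimensional constant. Properties $(1)$ and $(2)$ are then immediate from the tree structure: at each scale the descendants of the whole tree partition $E$, and nesting across scales is built in. Property $(4)$ follows from the two-sided containment $\Delta(x_j^k, \alpha_0 2^{-k}) \subset Q_j^k \subset \Delta(x_j^k, C 2^{-k})$---the outer bound obtained by summing a geometric series of descendant-distances---together with \eqref{Not.ADreg}; property $(5)$ is built into the definition.

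The delicate part, and the main obstacle, is the thin-boundary estimate $(6)$. I would argue that any $x \in Q_j^k$ with $\dist(x, E \setminus Q_j^k) \leq \tau 2^{-k}$ must, at some intermediate scale $k'$ with $2^{k-k'} \approx \tau^{1/N}$, lie in an ``uncertainty layer'' of its ancestor at scale $k'$, namely those descendants whose assignment under $\pi_{k'}$ could be switched to a neighboring parent by a perturbation of size $\tau 2^{-k}$. The key quantitative input is that at each scale this layer has measure bounded by a fixed fraction $1-\varepsilon < 1$ of the ancestor's measure---an estimate which uses the $ADR$ bounds together with the separation properties of the nets. Iterating this geometric decay down through the scales then produces the power bound $\tau^\eta$ with $\eta = -\log(1-\varepsilon)/(N\log 2) > 0$. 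The hardest aspect is setting up the parent assignment with enough built-in ``slack'' so that the per-scale loss is strictly less than $1$; without such a refinement the naive nearest-center construction yields only $(1)$--$(5)$, and it is precisely this step that makes the $ADR$ hypothesis, rather than just upper regularity, essential.
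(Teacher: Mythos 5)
The paper does not actually prove Lemma \ref{Not.Dyad.Cubes}; in the sentence preceding the statement the authors explicitly defer the construction to \cite{Ch}, \cite{D1}, \cite{HytK} and \cite{AHyt}. So your sketch is being measured against the standard constructions in those references, not against an in-text argument.

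Your outline follows Christ's scheme in broad strokes (nets, parent maps, cubes built from descendant balls, geometric iteration for the thin boundary), but it has a genuine gap at property (2). You say that ``nesting across scales is built in,'' which is true for the containment half; but (2) also asserts the dichotomy that a finer cube meeting $Q_j^k$ must be \emph{contained} in it, and together with (1) and (3) this forces the generation-$k$ cubes to be pairwise essentially disjoint. A nearest-center parent map with an arbitrary tie-break does not deliver this: the total drift of a descendant chain is $\sum_{i>k}2^{-i}\approx 2^{-k}$, which is not small compared with the $2^{-k}$ separation of the scale-$k$ centers, so a chain rooted at $x_j^k$ can wander arbitrarily close to a different center $x_{j'}^k$, and the balls in your definition of $Q_j^k$ and $Q_{j'}^k$ then overlap on sets of positive measure (and a fine cube can meet both without lying in either). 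Christ's construction avoids this with a more restrictive rule---if a scale-$(k+1)$ net point lies within a small fixed fraction of $2^{-k}$ of some scale-$k$ center, it \emph{must} be assigned to that center---combined with a sufficiently small ratio $\delta$ between consecutive net scales so that the descendant trees of distinct scale-$k$ centers stay $\gtrsim 2^{-k}$ apart; one then regroups scales to reach the dyadic normalization $\ell(Q)=2^{-k}$ (obtaining $\delta=1/2$ directly requires the extra devices of \cite{HytK} or \cite{AHyt}). Relatedly, the ``slack'' in the parent rule is what yields (2), not (6): once (2), (4) and (5) are in place, the thin-boundary bound (6) is the standard deep-core iteration, since by (5) and the lower ADR bound each subcube of scale $k+m$ contains a surface ball of comparable measure lying a definite multiple of $2^{-(k+m)}$ inside it, hence outside the $\tau$-neighborhood of $E\setminus Q_j^k$, and this requires essential disjointness of the subcubes so that the losses can be summed. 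So the real role of the refined parent assignment is misattributed in your proposal.
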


Let us now mention few remarks concerning the last lemma.

\begin{remark}
\begin{itemize}
\item[(i)] We shall denote by $\mathbb{D}=\mathbb{D}(E)$ the collection of all relevant $Q^k_j$, i.e.,
$$ \mathbb{D} =\bigcup_k \mathbb{D}_k.$$
\item[(ii)] If $diam(E)$ is finite we ignore all those $k \in \mathbb{Z}$ such that $2^{-k} \gtrsim diam(E)$ and thus previously the union runs over all those $k$'s for which $2^{-k} \lesssim diam(E)$.
\item[(iii)]For a dyadic cube $Q \in \mathbb{D}_k$ we set $\ell(Q)=2^{-k}$ and we shall refer to this quantity as the ``length'' of $Q$. Obviously, $\ell(Q) \approx diam(Q)$.
\end{itemize}
\end{remark}

\begin{definition}
In view of \cite{HSw} and \cite{HMY}, a sequence of operators $\{S_k\}_{k \in \mathbb{Z}}$ is said to be an {\tt approximation to the identity} if there exist $0<\epsilon$ and $0<A,C< \infty$ such that for all $x, x', y$ and $y' \in E$, the kernel of $S_k$, are functions from $E \times E$ into $\mathbb{C}$ satisfying

\begin{align}
S_j(x,y)&=0,\,\, \text{if}\,\, |x-y|\geq C2^{-j},\label{Sj.compact.support}\\
|S_j(x,y)|&\leq C 2^{jn},\label{Sj.pointwise}\\
|S_j(x,y)-S_j(x',y)|&\leq C 2^{j(n+\epsilon)}|x-x'|^{\epsilon},\label{Sj.Holder1}\\
|S_j(x,y)-S_j(x,y')|&\leq C 2^{j(n+\epsilon)}|y-y'|^{\epsilon},\label{Sj.Holder2}\\
\int S_j(x,y) d\sigma(x) &=1, \label{Sj.int=0.x} \\
\int S_j(x,y) d\sigma(y) &=1.\label{Sj.int=0.y}
\end{align}
\end{definition}
Notice that the bound in \eqref{Sj.pointwise} can be replaced by $\frac{C 2^{-j\epsilon}}{(2^{-j}+|x-y|)^{n+\epsilon}}$. Indeed, since $S_j$ is supported in the set $\{|x-y|< C 2^{-j}\}$, it is always true that
$$2^{jn} \approx \frac{2^{-j\epsilon}}{(2^{-j}+|x-y|)^{n+\epsilon}}.$$
\begin{properties}
If we set $D_j=S_j-S_{j-1}$ then obviously, the kernel of $D_j$, say $\varphi_j(x,y)$, satisfies also properties \eqref{Sj.Holder1} and \eqref{Sj.Holder2} and in place of \eqref{Sj.int=0.x} and \eqref{Sj.int=0.y}, 
$$\int_E D_j(x,y) d\sigma(x)=\int_E D_j(x,y) d\sigma(y)=0.$$
Moreover, there exists a family of operators $(\widetilde{D}_j)_j$ (see \cite{HSw} for details) such that a discrete Calder\'on-type reproducing formula holds, i.e.,
\begin{equation}
I=\displaystyle\sum_j D_j\widetilde{D}_j , \,\,\,\text{in the}\,\, L^2 \,\, \text{sense},
\label{C-Z.reproducing} \end{equation}
and also
\begin{equation}\label{bound.tilde.Dj}
\int_E \left(\sum_j |\widetilde{D}_j f(x)|^2\right)^{p/2}\!\! d\sigma(x) \lesssim \int_E |f(x)|^p d\sigma(x) , \,\text{for}\, 1<p<\infty.
\end{equation}
\end{properties}

\begin{definition}
We shall introduce now some notation. Let $\Omega:= \mathbb{R}^{n+1} \setminus E$ be as before and $I$ be an $n+1$-dimensional cube, then given a ``surface" cube $Q$, we define an associated ``{\tt Whitney region}" as follows: Let $\mathcal{W} = \mathcal{W}(\Omega)$ denote a collection of dyadic Whitney cubes of $\Omega$, so that the cubes in $\mathcal{W}$ form a pairwise non-overlapping covering of $\Omega$, which satisfy
\begin{equation}
4 diam(I) \leq dist(4I, \partial \Omega) \leq dist(I, \partial \Omega) \leq 40 diam(I),\quad	\text{for every}\quad I \in \mathcal{W},
\end{equation} 
(just divide dyadically the standard Whitney cubes, as constructed in \cite{St}, Chapter
VI, into cubes with side length $1/8$ as large) and also 
\begin{equation}
(1/4) diam(I_1) \leq diam(I_2) \leq 4 diam(I_1),
\end{equation}

\noindent whenever $I_1$ and $I_2$ ``touch". Since $E$ is closed, $\Omega$ is open and so it allows a Whitney decomposition $\Omega=\displaystyle\bigcup_{I\in \mathcal{W}}I$ where 
$$\mathcal{W}:=\bigcup_k \mathcal{W}_k \quad \text{and} \quad \mathcal{W}_k:=\{I:\ell(I) = 2^{-k}\}.$$
\end{definition}

\begin{definition}
Fix $x\in E$. Then we define the {\tt cone} $\Gamma_{\beta}\,(x)$ with vertex $x$ and aperture $\beta>0$ to be
\begin{equation}\label{def.cone}\Gamma_{\beta}\,(x):=\bigcup_{Q \ni x} \mathcal{U}_{Q,\,\beta},\end{equation}
where 
$$\mathcal{U}_{Q,\,\beta}=\bigcup_{I\in\mathcal{C}_{Q,\,\beta}}I,$$ 
and  
$$\mathcal{C}_{Q,\,\beta}:=\left\{I\in \mathcal{W} : \ell(I)/8 \leq \ell(Q) \leq 8 \ell(I),\,\, dist(Q,I)\leq\beta \ell(Q) \right\}.$$ 
The {\tt $Q$-truncated cone} will be denoted by
$$\Gamma_{Q,\,\beta}\,(x):= \bigcup_{\substack{Q'\ni x\\ Q' \subseteq Q}} \mathcal{U}_{Q',\,\beta}.$$
\end{definition}

Observe that, by choosing $\beta$ sufficiently large, depending on the Ahlfors-David parameters, we may suppose that $\mathcal{C}_{Q,\,\beta}\neq\emptyset$, for every $Q$. We fix now such a $\beta$ which we use in the sequel and for simplicity we write $\Gamma(x)$, $\Gamma_Q(x)$, $\mathcal{C}_Q$ and $\mathcal{U}_Q$.

\begin{definition}
We also define 
\begin{equation}
\Theta f(X):=\int_E\psi(X,y)f(y)d\sigma(y),
\label{Theta}\end{equation}
where for $\psi(X,y)$, there exist $C>0$ and $\alpha>0$, such that
\begin{align}
|\psi(X,y)| &\leq C\frac{\delta(X)^{\alpha}}{|X-y|^{n+\alpha}}, \quad y\in E,\, X\in\Omega, \label{psi.ptwise.estimate}\\
|\psi(X,y)-\psi(X,y')| &\leq C \frac{|y-y'|^{\alpha}}{|X-y|^{n+\alpha}},\quad y, y' \in E,\,X\in \Omega, \,\, 2|y-y'| \leq |X-y|.
\label{psi.holder.cont}\end{align}
\label{definitiontheta}\end{definition}
Our main goal is to prove the following theorem.
\begin{theorem}
Let $E \in ADR$ and $\Theta$ be as in definition \ref{definitiontheta}. Suppose also that there exists a positive constant $C_0$, $p \in (1,\infty)$ and a system $\{b_Q\}$ of funtions indexed by dyadic cubes $Q\subseteq E$, such that for each dyadic cube $Q \in \mathbb{D}(E)$,

\begin{align} 
&\left|\int_Q b_Q(x)d\sigma(x) \right| \geq \frac{1}{C_0}\sigma(Q) \label{main.theorem.eq1}\\
&\int_E \left|b_Q(x) \right|^p d\sigma(x) \leq C_0\sigma(Q)\label{main.theorem.eq2} \\
&\int_{Q}\left(\iint_{\Gamma_Q(x)}|\Theta b_Q(Y)|^2\frac{dY}{\delta(Y)^{n+1}} \right)^{p/2}d\sigma(x) \leq C_0\sigma(Q). \label{main.theorem.eq3}
\end{align}
Then there exists a positive constant $C$ such that the following square function estimate holds.
\begin{equation}
\iint_{\Omega}|\Theta f(Y)|^2\frac{dY}{\delta(Y)}\leq C \int_E|f(x)|^2 d\sigma(x).
\label{square.function.estimate}\end{equation}
\label{main.theorem}\end{theorem}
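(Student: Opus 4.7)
The plan is to reduce the square function bound \eqref{square.function.estimate} to a Carleson measure estimate for $d\mu(Y):=|\Theta 1(Y)|^2\,dY/\delta(Y)$ on $\Omega$, and then to establish that Carleson bound by a stopping-time argument based on the pseudo-accretive system $\{b_Q\}$. The reduction is a $T1$-type theorem for vertical square functions: using the Calder\'on reproducing formula \eqref{C-Z.reproducing} together with the almost-orthogonality between $\Theta$ and $D_j=S_j-S_{j-1}$ furnished by the pointwise/H\"older estimates \eqref{psi.ptwise.estimate}--\eqref{psi.holder.cont} combined with \eqref{Sj.compact.support}--\eqref{Sj.Holder2}, one absorbs the off-diagonal contributions via a Cotlar--Schur argument and reduces matters to the local Carleson bound
\[
\alpha(Q_0):=\iint_{R_{Q_0}} |\Theta 1(Y)|^2 \,\frac{dY}{\delta(Y)} \leq C\,\sigma(Q_0),\qquad Q_0\in\mathbb{D}(E),
\]
where $R_{Q_0}:=\bigcup_{Q\subseteq Q_0}\mathcal{U}_Q$ denotes the Carleson region over $Q_0$.

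\textbf{Stopping-time construction.}
Fix $Q_0\in\mathbb{D}(E)$, set $b:=b_{Q_0}$, and after rotating by a unimodular constant assume $\R\int_{Q_0}b\,d\sigma\geq \sigma(Q_0)/C_0$. Build a maximal family $\mathcal{F}=\mathcal{F}(Q_0)$ of pairwise disjoint dyadic subcubes $Q\subsetneq Q_0$ at which at least one of
\[
(a)\ \R\langle b\rangle_Q \leq \tfrac{1}{2C_0},\qquad (b)\ \langle |b|^p\rangle_Q \geq K,\qquad (c)\ \langle (\mathcal{S}_{Q_0}b)^p\rangle_Q \geq K
\]
holds, where $\mathcal{S}_{Q_0}$ denotes the truncated-cone square function appearing in \eqref{main.theorem.eq3} and $\langle\cdot\rangle_Q$ is the $\sigma$-average over $Q$. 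Chebyshev's inequality combined with hypotheses \eqref{main.theorem.eq1}--\eqref{main.theorem.eq3} lets us choose $K$ so large that the packing bound $\sum_{Q\in\mathcal{F}}\sigma(Q)\leq \theta\,\sigma(Q_0)$ holds for some $\theta<1$ depending only on $p$, $C_0$, and the ADR constants.

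\textbf{Good-region analysis and self-improvement.}
Let $G(Q_0):=R_{Q_0}\setminus\bigcup_{Q\in\mathcal{F}}R_Q$. For each Whitney box $I\subset G(Q_0)$ let $Q(I)\in\mathbb{D}(E)$ be the dyadic cube of comparable side length sitting below $I$; by the stopping rule all three averages of $b$ remain under control on $Q(I)$, in particular $|\langle b\rangle_{Q(I)}|\gtrsim 1$. For $Y\in I$ decompose
\[
\Theta 1(Y)=\frac{1}{\langle b\rangle_{Q(I)}}\,\Theta b(Y) + \Theta\!\left(1-\frac{b}{\langle b\rangle_{Q(I)}}\right)\!(Y).
\]
The first term integrated against $dY/\delta(Y)$ over $G(Q_0)$ is bounded by $\sigma(Q_0)$ using \eqref{main.theorem.eq3} and the fact that $G(Q_0)$ is foliated by $Q_0$-truncated cones $\Gamma_{Q_0}(x)$ for $x\in Q_0\setminus\bigcup\mathcal{F}$. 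The second (error) term, whose testing function has vanishing mean on $Q(I)$, is handled via H\"older continuity \eqref{psi.holder.cont}: telescoping against the reproducing formula \eqref{C-Z.reproducing} and using \eqref{bound.tilde.Dj} converts it into a discrete Littlewood--Paley square function of $b$ whose $L^p(Q_0)$ norm is controlled by \eqref{main.theorem.eq2}, after which a Carleson embedding adapted to the Whitney partition of $G(Q_0)$ concludes the estimate. Combining both contributions with the packing of $\mathcal{F}$ produces a recursive inequality $\alpha(Q_0)\leq C\,\sigma(Q_0)+\sum_{Q\in\mathcal{F}}\alpha(Q)$; together with $\sum_{\mathcal{F}}\sigma(Q)\leq \theta\,\sigma(Q_0)$ and a John--Nirenberg-type absorption for Carleson measures, this yields $\alpha(Q_0)\leq C\sigma(Q_0)$.

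\textbf{Main obstacle.}
The principal difficulty is that the $L^p$ (rather than $L^\infty$) testing hypothesis makes the error factor $1-b/\langle b\rangle_{Q(I)}$ unbounded, so the pointwise Christ-type control is unavailable. One must instead extract $L^p$ estimates of the discrete square function produced by the telescoping step and feed them into a Carleson embedding \emph{without losing constants} that would corrupt the closing argument. The most delicate point is ensuring that the $L^p(Q_0)$ cone-square-function hypothesis \eqref{main.theorem.eq3} interacts correctly with the stopping tree, so that the error contribution is absorbed into the $\theta\,\alpha(Q_0)$ term rather than producing an uncontrolled multiple of $\alpha(Q_0)$; this is precisely the $L^p$ extension of Hofmann's argument that the paper must carry out in the ADR setting, using Lemma \ref{Not.Dyad.Cubes} and the approximation to the identity $\{S_k\}$ as substitutes for their Euclidean counterparts.
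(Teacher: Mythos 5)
Your overall architecture matches the paper's: stopping time on $b_{Q_0}$, a Coifman--Meyer decomposition $\Theta 1 = \Theta b/\langle b\rangle + \mbox{error}$ (which is the same as the paper's $(\Theta1)\mathbb{E}_{Q'} = R + \Theta$ after dividing by $\mathbb{E}_{Q'}b_Q$), almost-orthogonality via the Calder\'on reproducing formula and \eqref{bound.tilde.Dj} to handle the error, and a $T1$ theorem to pass from the Carleson bound to \eqref{square.function.estimate}. Two points are worth flagging, the second a genuine gap.

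First, your stopping rule uses three conditions, but the paper stops \emph{only} on $\R\langle b\rangle_Q \leq C_0^{-1}$ and derives the packing bound from \eqref{main.theorem.eq1}--\eqref{main.theorem.eq2} as in \cite{H1}; conditions (b) and (c) are not part of the paper's argument. Adding (c) does give you one thing for free: by Lebesgue differentiation on the good set $Q_0\setminus\bigcup\mathcal{F}$, it upgrades the $L^{p/2}$ control of $x\mapsto\iint_{\Gamma_{Q_0}(x)}|\Theta b|^2\,dY/\delta^{n+1}$ to a pointwise bound, so the $\Theta b/\langle b\rangle$ part of the good-region integral is indeed $\lesssim\sigma(Q_0)$. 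But that is not where the $p<2$ difficulty lies.

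The gap is the error (Coifman--Meyer) term for $p\in(1,2)$. Your almost-orthogonality step produces an $L^{p/2}(Q_0)$ bound on the good-region square function of $Rb_Q$, i.e.\ precisely the paper's \eqref{c3.theorem.eq2}. For $p<2$ this does \emph{not} give an $L^1$ bound on $\iint_{G(Q_0)}|Rb_Q|^2\,dY/\delta(Y)$, so the recursive inequality $\alpha(Q_0)\leq C\sigma(Q_0)+\sum_{\mathcal{F}}\alpha(Q)$ that you invoke does not follow; you are implicitly assuming the very Carleson bound you want. There is no fourth stopping condition available to get pointwise control of $Rb_Q$, since $R=\Theta - (\Theta 1)\mathbb{E}_{Q'}$ changes with the scale $Q'$. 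The paper resolves this with a separate two-step mechanism: from \eqref{c3.theorem.eq1}--\eqref{c3.theorem.eq2} it derives a \emph{measure-level} estimate $\sigma(\{x\in Q: g_Q(x)>N\})\leq(1-\beta)\sigma(Q)$ for a suitable $N$ (Chebyshev on the good set plus the packing bound), and then proves the self-contained Lemma \ref{c4.sublemma}, a John--Nirenberg argument whose dyadic decomposition is over the superlevel set $\{g_{Q,\varepsilon}>N\}$ rather than over the stopping family $\mathcal{F}$. Your sketch names a ``John--Nirenberg-type absorption'' but attaches it to the wrong decomposition; as written the closing of the argument does not go through for $p<2$.

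You also omit the truncation $K(\varepsilon)<\infty$ needed to justify the absorption, and the separate treatment of bounded $E$ in Section \ref{sec.4}; both are minor but necessary.
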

\begin{remark} Notice that in the case $p=2$, \eqref{main.theorem.eq3} is equivalent to $$\iint_{\mathcal{T}_Q}|\Theta b_Q(Y)|^2\frac{dY}{\delta(Y)}\leq C_0\sigma(Q),$$
where $\mathcal{T}_Q=\displaystyle\bigcup_{Q'\subseteq Q}\displaystyle\mathcal{U}_{Q'}$.
\end{remark}

In section \ref{sec.3} we prove our results in the case that $E$ is unbounded and the proof of the theorem goes through the proof of an auxiliary lemma (Lemma \ref{c3.lemma}) and a $T1$ theorem for square functions, while in section \ref{sec.4}, we show that the same result holds even in the case that $E$ is bounded.

\section{Proof of Theorem \ref{main.theorem} when $E$ is an unbounded $ADR$ set} \label{sec.3}

To prove Theorem \ref{main.theorem} we shall need the following lemma.

\begin{lemma}\label{c3.lemma}
Suppose that there are $\eta\in(0,1)$ and $C_1>0$ such that for every cube $Q \in \mathbb{D}(E)$ there exists a family $\mathcal{F}=\{Q_k\}$ of dyadic subcubes of $Q$ such that for any $p \in (1,2]$,
\begin{align}
&\sum_k \sigma(Q_k) \leq (1-\eta)\sigma(Q), \label{c3.theorem.eq1}\\
&\int_Q \left(\iint_{\gamma_Q(x)} \left|\Theta 1(Y)\right|^2\frac{dY}{\delta(Y)^{n+1}}\right)^{p/2}d\sigma(x) \leq C_1 \sigma(Q) \label{c3.theorem.eq2},
\end{align}
where
$$ \gamma_Q(x):=\bigcup_{\substack{Q' \ni x \\Q' \in Good(Q)}} \mathcal{U}_{Q'}$$ 
and 
\begin{align*} Good(Q) := \{ Q' \subseteq Q :\,\, & \text{either} \,\, Q'\cap Q_k=\emptyset,\,\, \text{for every}\,\, k,\\
&\text{or if}\,\, Q_k \cap Q' \neq \emptyset,\,\, \text{for some}\,\,k ,\,\, \text{then}\,\, \ell(Q')>\ell(Q_k) \}.
 \end{align*}

Then, there exists a positive constant $C$ such that
\begin{equation}
\sup_Q \frac{1}{\sigma(Q)}\int_Q \iint_{\Gamma_Q(x)}\left|\Theta1(Y)\right|^2\frac{dY}{\delta(Y)^{n+1}}d\sigma(x) < C.
\end{equation}

\end{lemma}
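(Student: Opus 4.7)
The plan is to bound $M := \sup_Q F(Q)/\sigma(Q)$ by a constant independent of $Q$, where $F(Q) := \int_Q \iint_{\Gamma_Q(x)} |\Theta 1|^2 \delta^{-n-1} dY \, d\sigma(x)$. A standard preliminary truncation (restricting to Whitney regions of side length $\ge 2^{-N}$) makes $M$ finite a priori; the target is a bound independent of $N$.

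For each $Q$, the stopping family $\{Q_k\}$ from the hypothesis produces a disjoint decomposition $\Gamma_Q(x) = \gamma_Q(x) \sqcup \Gamma_{Q_k}(x) \mathbf{1}_{\{x \in Q_k\}}$, and hence
\[
F(Q) = A(Q) + \sum_k F(Q_k), \qquad A(Q) := \int_Q g_Q\, d\sigma, \quad g_Q(x) := \iint_{\gamma_Q(x)} |\Theta 1|^2 \delta^{-n-1} dY.
\]
By the packing condition \eqref{c3.theorem.eq1}, $\sum_k F(Q_k) \le (1-\eta) M \sigma(Q)$, so taking the supremum over $Q$ and rearranging gives $\eta M \le \sup_Q A(Q)/\sigma(Q)$. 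The lemma therefore reduces to the uniform bound $A(Q) \le C \sigma(Q)$.

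For $p = 2$, hypothesis \eqref{c3.theorem.eq2} is exactly this, and the proof is complete with $M \le C_1/\eta$. For $p \in (1, 2)$, only the weaker $\int_Q g_Q^{p/2} d\sigma \le C_1 \sigma(Q)$ is given. The plan is first to extend this $L^{p/2}$ bound from $g_Q$ to the full $G_Q := \iint_{\Gamma_Q(x)} |\Theta 1|^2 \delta^{-n-1} dY$: the subadditivity $(a+b)^{p/2} \le a^{p/2} + b^{p/2}$, valid for $p/2 \le 1$, applied to the cone decomposition yields a self-similar inequality for $\int G_Q^{p/2} d\sigma$ which closes (via the same packing argument) to give the uniform weak-$L^{p/2}$ bound $\sup_Q \int_Q G_Q^{p/2}/\sigma(Q) \le C_1/\eta$.

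The main obstacle is then to bootstrap this weak-$L^{p/2}$ control into the strong $L^1$ bound $A(Q) \le C \sigma(Q)$. I will handle this by a John--Nirenberg style iteration: performing a Calder\'on--Zygmund stopping on $g_Q^{p/2}$ at a large threshold $\lambda_0^{p/2}$, chosen so that the Chebyshev bound $C_1 \lambda_0^{-p/2}\sigma(Q)$ on the added stopping mass absorbs into a fixed fraction of $\eta$, the enlarged family still packs with constant strictly less than $1$; outside it $g_Q \le \lambda_0$ pointwise, while inside each new stopping cube one iterates. Generation by generation the union of stopping cubes has geometrically decaying measure, and the good-part contribution at each generation is $O(\lambda_0 \sigma(R))$, summing to a Carleson-type bound. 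The technical difficulty is the bookkeeping of ``ancestor contributions'' that enter on each Calder\'on--Zygmund stopping cube $R_j$ (writing $g_Q|_{R_j}$ as a constant ancestral mass $C_j$ plus $G_{R_j}$, and controlling $\sum_j C_j \sigma(R_j)$ via Fubini against the global Carleson mass) and the careful balancing of $\lambda_0$ against $\eta$ so that the resulting linear inequality for $M$ closes with a constant independent of the truncation.
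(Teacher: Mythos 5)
Your $p=2$ argument and your reduction $F(Q) = A(Q) + \sum_k F(Q_k)$ followed by absorption via the packing condition are exactly the paper's argument, and your overall plan for $p<2$ (pass to a distributional estimate on the full-cone square function and then run a John--Nirenberg / stopping-time iteration) is also the route the paper takes; the intermediate upgrade of the $L^{p/2}$ bound from the sawtooth $g_Q$ to the full cone $G_Q$ is an unnecessary detour (Chebyshev applied directly on $Q\setminus\bigcup_k Q_k$, where $\Gamma_Q=\gamma_Q$, already gives $\sigma(\{G_Q>N^{2/p}\})\leq(1-\eta)\sigma(Q)+C_1N^{-1}\sigma(Q)$), but that is harmless.

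The genuine gap is in your treatment of the ancestor contributions. You propose to control $\sum_j C_j\,\sigma(R_j)$ ``via Fubini against the global Carleson mass,'' but Fubini only yields
\[
\sum_j C_j\,\sigma(R_j)\;=\;\sum_{Q'\subseteq Q}\Bigl(\iint_{\mathcal{U}_{Q'}}|\Theta 1|^2\frac{dY}{\delta(Y)^{n+1}}\Bigr)\sum_{j:\,R_j\subsetneq Q'}\sigma(R_j)\;\leq\;\int_{\bigcup_j R_j}G_Q\,d\sigma,
\]
which is precisely the quantity you are trying to bound; there is no small factor and the linear inequality does not close. The paper's fix is a pointwise bound, not an integrated one: after covering the level set $\{g_{Q,\varepsilon}>N\}$ by a relatively open set $\mathcal{O}$ of measure $<(1-\beta/2)\sigma(Q)$ and taking the maximal dyadic cubes $R_j\subseteq\mathcal{O}$, the dyadic \emph{parent} $R_j^*$ is not contained in $\mathcal{O}$, so it contains a point $x_j$ with $G_{Q,\varepsilon}(x_j)\leq N^{2/p}$. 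Every $Q'$ feeding the ancestral part of the cone at $x\in R_j$ satisfies $R_j\subsetneq Q'$, hence $R_j^*\subseteq Q'$ and $x_j\in Q'$, so the ancestral cone sits inside $\Gamma_{Q,\varepsilon}(x_j)$ and $C_j\leq N^{2/p}$ pointwise. This gives $\sum_j C_j\sigma(R_j)\leq N^{2/p}(1-\beta/2)\sigma(Q)$, which (together with $\sum_j\int_{R_j}G_{R_j,\varepsilon}\leq K(\varepsilon)\sum_j\sigma(R_j)\leq(1-\beta/2)K(\varepsilon)\sigma(Q)$) closes the inequality $K(\varepsilon)\leq CN^{2/p}+(1-\beta/2)K(\varepsilon)$. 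Without this parent-point observation your iteration does not terminate with a constant independent of the truncation.
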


\begin{remark}
The ``good" cubes are those $Q'\subseteq Q$ that are not contained in any $Q_k\in\mathcal{F}$ and $Good(Q)$ is sometimes called the discrete sawtooth relative to the family $\mathcal{F}$.
\end{remark}

Let us give an outline of the proof. We first show that the conditions of Theorem \ref{main.theorem} imply the conditions of Lemma \ref{c3.lemma}. Then we prove Lemma \ref{c3.lemma} and by the use of a $T1$ Theorem for square functions that we show at the end, we conclude Theorem \ref{main.theorem}. 

\subsection{Conditions of Theorem \ref{main.theorem} imply conditions of Lemma \ref{c3.lemma}}\label{3.1}

Fix $Q\in\mathbb{D}(E)$ and then for this cube we choose the family $\mathcal{F}=\{Q_k\}$ as subcubes of Q maximal with respect to the stopping time condition $\mathcal{R}e\int_{Q_k}b_Q(x)d\sigma(x)\leq{C_0}^{-1}\sigma(Q_k)$; thus, we have that $|\mathbb{E}_{Q'}b_Q|\geq{C_0}^{-1}$, for every $Q'\in Good(Q)$, where $$\mathbb{E}_{Q'}f:=\frac{1}{\sigma(Q')}\int_{Q'}f(x) d\sigma(x).$$

Then the proof of \eqref{c3.theorem.eq1} follows from \eqref{main.theorem.eq1} and \eqref{main.theorem.eq2} by following {\it mutatis mutandi} the corresponding argument in \cite[p.4]{H1}, so it is enough to check the validity of \eqref{c3.theorem.eq2}.

\begin{align}
&\int_Q \left(\iint_{\gamma_Q(x)} \left| \Theta1(Y) \right|^2 \frac{dY} {\delta(Y)^{n+1}}\right)^{p/2}d\sigma(x) \notag \\ 
&= \int_Q\left(\sum_{\substack{Q' \ni x\\ Q'\in Good(Q)}} \iint_{\mathcal{U}_{Q'}} \left|\Theta1(Y)\right|^2\frac{dY}{\delta(Y)^{n+1}}\right)^{p/2}d\sigma(x) \notag \\
&\leq C \int_Q \left(\sum_{\substack{Q' \ni x \\ Q'\in Good(Q)}} \iint_{\mathcal{U}_{Q'}}\left|\Theta 1(Y)\right|^2 \left| \mathbb{E}_{Q'}b_Q \right|^2\frac{dY}{\delta(Y)^{n+1}}\right)^{p/2}d\sigma(x),\label{c3.Cond.Th=>Cond.Lemma.Good.Sq.F.Est.}
\end{align}
and it remains to prove that the latter is less than a constant multiple of $\sigma(Q)$. At this point we shall use the Coifman-Meyer method and decompose 
\begin{equation}(\Theta1) \mathbb{E}_{Q'}=(\Theta1 \mathbb{E}_{Q'}-\Theta )+  \Theta =: R +  \Theta, \label{C-M.split.p=2}
\end{equation}

\noindent and by \eqref{main.theorem.eq3} we only need to prove that
\begin{equation*}
\int_Q\left( \sum_{\substack{Q' \ni x \\ Q'\in Good(Q)}} \iint_{\mathcal{U}_{Q'}}\left| Rb_Q(Y)\right| ^2\frac{dY}{\delta(Y)^{n+1}}\right)^{p/2}d\sigma(x) \lesssim \| b_Q \|_{L^p}^p \lesssim \sigma(Q),
\end{equation*}
or equivalently,
\begin{equation}
\int_Q \left(\sum_k \sum_{\substack{ Q' \in \mathbb{D}_k\\Q'\in Good(Q)}} \textbf{1}_{Q'}(x)\iint_{\mathcal{U}_{Q'}} \left| Rb_Q(Y)\right| ^2\frac{dY}{\delta(Y)^{n+1}}\right)^{p/2}d\sigma(x) \lesssim  \| b_Q\|_{L^p}^p \lesssim \sigma(Q).\label{c3.Cond.Th=>Cond.Lemma.Good.Sq.F.Est.Rf}
\end{equation}

By the discrete Calder\'{o}n's reproducing formula \eqref{C-Z.reproducing} one can write
\begin{align*} 
Rb_Q(Y) &=\sum_j \left( \Theta 1(Y) \mathbb{E}_{Q'}- \Theta \right) (D_j \widetilde{D}_j) b_Q(Y)\\ &=\sum_j \left( \Theta1(Y) \mathbb{E}_{Q'}-\Theta \right)D_j f_j(Y),
\end{align*}
where $f_j :=\widetilde{D}_j b_Q$. If $\varphi_j(\cdot ,\cdot)$ is the kernel of $D_j$, the latter part of the equation above is equal to
$$\sum_j\int_E f_j(w) T_j(w,Y) \, d\sigma(w),$$
where
$$ T_j(w,Y):=\int_E \left( \frac{\Theta 1(Y)}{\sigma(Q')} \textbf{1}_{Q'}(z)-\psi(Y,z) \right) \varphi_j(z,w)\,d\sigma(z).$$

Let us recall that the diameter of the cube $Q'$ is $2^{-k}$ since $Q'\in\mathbb{D}_k$ and decompose the problem in two cases. We first let the indices $k, j$ to be so that $2^{-k} \leq C_2 2^{-j}$ and $x$ be an arbitrary point of $Q'$. Since $\int_E \frac{\Theta1(Y) }{\sigma(Q')} \textbf{1}_{Q'}(z)-\psi(Y,z)d\sigma(z)=0$ we can subtract $\varphi_j(x,w)$ for free and thus $T_j(w,Y)$ is equal to 
\begin{equation*}
\int_E \left( \frac{\Theta1(Y) }{\sigma(Q')} \textbf{1}_{Q'}(z)-\psi(Y,z) \right) \left( \varphi_j(z,w)-\varphi_j(x,w)\right)\, d\sigma(z).
\end{equation*}
We shall now write the domain of integration as the union of its subsets 
$$E_1:=\left\{z \in E: |z-x| \leq C_3 2^{-(k+j)/2}\right\}\,\,\, \text{and}\,\,\, E_2:=\left\{z \in E: |z-x| > C_3 2^{-(k+j)/2}\right\},$$ 
where $C_2,C_3>0$ are absolute constant appropriately chosen later on. Therefore, $T_j(w,Y)$ can be written as the sum $T^1_j(w,Y)+T^2_j(w,Y)$, where
$$T^i_j(w,Y):= \int_{E_i} \left( \frac{\Theta1(Y)}{\sigma(Q')} \textbf{1}_{Q'}(z)-\psi(Y,z) \right) \left( \varphi_j(z,w)-\varphi_j(x,w)\right)\, d\sigma(z),$$
for $i=1,2$. By H\"{o}lder's continuity of $\varphi_j$ and simple geometric considerations one can see that 
$$ |\varphi_j(z,w)-\varphi_j(x,w)| \lesssim \frac{|x-z|^{\epsilon}}{(2^{-j} +|x-w|)^{n+\epsilon}}.$$
The latter, along with the bound $|\Theta1(Y)| \lesssim\int_E\frac{2^{-k\alpha}}{(2^{-k}+|x-z|)^{n+\alpha}} d\sigma(z)\lesssim 1$, the pointwise bounds of $\psi$ and that $\delta(Y) \approx 2^{-k}$ and $|Y-z| \approx 2^{-k}+|x-z|$, since $x\in Q'$ and $Y\in\mathcal{U}_{Q'}$, shows that
\begin{align*}
|T^1_j(w,Y)| &\lesssim  \int_{E_1} \frac{2^{-k \alpha}}{(2^{-k} +|x-z|)^{n+\alpha}} \frac{|x-z|^{\epsilon}}{(2^{-j} +|x-w|)^{n+\epsilon}}\,d\sigma(z)\\
&+\frac{1}{\sigma(Q')}\int_{Q'} \frac{|x-z|^{\epsilon}}{(2^{-j} +|x-w|)^{n+\epsilon}}\,d\sigma(z)\\
&\lesssim  \int_{E_1} \frac{2^{-k \alpha}}{(2^{-k} +|x-z|)^{n+\alpha}} \frac{2^{-\epsilon(k+j)/2}}{(2^{-j} +|x-w|)^{n+\epsilon}}\,d\sigma(z)\\
&+\frac{1}{\sigma(Q')}\int_{Q'} \frac{2^{-\epsilon(k+j)/2}}{(2^{-j} +|x-w|)^{n+\epsilon}}\,d\sigma(z)\\
& \lesssim \left(\frac{2^{-k}}{2^{-j}}\right)^{\epsilon/2} \frac{2^{-j\epsilon}}{(2^{-j} + |x-w|)^{n+\epsilon}}=:\left(\frac{2^{-k}}{2^{-j}}\right)^{\epsilon/2} \omega_{j,\epsilon}(x,w),
\end{align*}
where in the last inequality we used the fact that 
$\int_E  \omega_{k,\alpha} (x,z)\, d\sigma(z) \lesssim 1.$
Moreover, if we define
$$T^{2,1}_j(w,Y):= \int_{E_2}  \frac{\Theta1(Y)}{\sigma(Q')} \textbf{1}_{Q'}(z) \left( \varphi_j(z,w)-\varphi_j(x,w)\right)\, d\sigma(z)$$
and
$$T^{2,2}_j(w,Y):= \int_{E_2}\psi(Y,z)  \left( \varphi_j(z,w)-\varphi_j(x,w)\right)\, d\sigma(z),$$
then $T^2_j(w,Y)=T^{2,1}_j(w,Y)-T^{2,2}_j(w,Y)$. It is not hard to see that $T^{2,1}_j(w,Y)=0$. Indeed, if $x\in Q'$ and $l(Q')\approx 2^{-k}$, then $|x-z|>C_32^{-(k+j)/2}>C_32^{-k}$, and thus we can choose $C_3$ big enough so that $z\not\in Q'$, which proves our claim. To this end, it is only left to bound $T^{2,2}_j(w,Y)$. For, if we set $\widetilde{\omega}_j(x,y):= \textbf{1}_{\{|x-y|\lesssim 2^{-j}\}} 2^{jn}$, obviously, $|\varphi_j(x,y)| \lesssim \widetilde{\omega}_j(x,y)$, which in turn shows that
\begin{align*}
|T^{2,2}_j(w,Y)| &\lesssim \int_{E_2} \omega_{k,\alpha} (x,z) \left(\widetilde{\omega}_j(z,w)+\widetilde{\omega}_j(x,w) \right)d\sigma(z)\\
&\lesssim \left(\frac{2^{-k}}{2^{-j}}\right)^{\alpha/2} \int_{E_2} \omega_{\frac{k+j}{2},\alpha} (x,z) \left(\widetilde{\omega}_j(z,w)+\widetilde{\omega}_j(x,w) \right)d\sigma(z).\end{align*}
Clearly, 
$$\int_E \widetilde{\omega}_j(x,w) |f_j(w)|\, d\sigma(w) \lesssim \mathcal{M}(f_j)(x),$$
where $\mathcal{M}$ stands for the dyadic Hardy-Littlewood maximal operator, which after applying Fubini conveniently above, allows us to conclude that \begin{equation}\label{c3.estimate.k>j}
\left| \int_E  f_j(w) T_j(w,Y)\, d\sigma(w) \right| \lesssim \left(\frac{2^{-k}}{2^{-j}}\right)^{\min\{\alpha,\epsilon\}/2} \Big( \mathcal{M}(\mathcal{M}(f_j))(x)+\mathcal{M}(f_j)(x) \Big).
\end{equation} 

We now turn our attention to the case that $2^{-j}<\frac{1}{C_2} 2^{-k}$. We treat $T_j(w,Y)$ by writing it as the difference 
$$\int_E  \Theta1(Y) \frac{1}{\sigma(Q')} \textbf{1}_{Q'}(z) \varphi_j(z,w)\,d\sigma(z)-\int_E  \psi(Y,z) \varphi_j(z,w)\,d\sigma(z)=:I^1_j(w,Y)-I^2_j(w,Y)$$
and then we bound each part separately. The main difficulty relies on estimating $I^1_j(w,Y)$ while it is fairly easy to derive the desired bound for $I^2_j(w,Y)$. Indeed, by choosing $C_2$ depending on the dimension and the support of $\varphi_j$ so that $2|z-w|\leq |Y-w|$, we have
\begin{align*}
\left| \int_E  \psi(Y,z) \varphi_j(z,w)\,d\sigma(z) \right| &= \left| \int_E   \left( \psi(Y,z) - \psi(Y,w) \right) \varphi_j(z,w)\,d\sigma(z) \right|\\
&\lesssim\int_E \frac{|z-w|^\alpha}{\left(2^{-k}+ |x-w| \right)^{n+\alpha}} |\varphi_j(z,w)|\,d\sigma(z).
\end{align*}
Since $\varphi_j(z,w)$ is supported in the ball $\{|z-w|<C 2^{-j}\}$, the latter is bounded by a constant multiple of
\begin{equation*} \int_E \frac{2^{-j\alpha}}{\left(2^{-k}+ |x-w| \right)^{n+\alpha}} |\varphi_j(z,w)|\,d\sigma(z),\end{equation*}
which, in turn, in glance of $\int_E|\varphi_j(z,w)|\,d\sigma(z)\lesssim 1$, shows that 
\begin{equation} |I^2_j(w,Y)| \lesssim \left(\frac{2^{-j}}{2^{-k}}\right)^\alpha \omega_{k,\alpha}(x,w).\label{c3.estimate1.j>>k}\end{equation}
Hence,
$$ \left|\int_E f_j(w) I^{2}_j(w,Y) \,d\sigma(w) \right| \lesssim \left(\frac{2^{-j}}{2^{-k}}\right)^\alpha \mathcal{M}(f_j)(x).$$

In order to handle $I^1_j(w)$ we shall introduce an auxiliary function $\eta_{Q'}\!\in\!\mathcal{C}_0^{\infty}(Q')$  that vanishes outside $Q'$ and is identically $1$ in a set $R \subset Q'$, which set has the additional property 
$$\sigma(Q' \setminus R) \approx 2^{-(j+k)/2} 2^{-k(n-1)}.$$ 
Moreover, we may assume that $\eta_{Q'}$ satisfies the estimate
$$||\nabla\eta_{Q'}||_{L^{\infty}}\lesssim \frac{1}{ 2^{-(j+k)/2}}.$$
Before we show that we can construct such a function let us see how it helps us bound $I^1_j(w,Y)$. By adding and subtracting $\eta_{Q'}$ in $I^1_j(w,Y)$ the latter is equal to
\begin{align*}
\frac{1}{\sigma(Q')} \int_E  \Theta1(Y)  ( \textbf{1}_{Q'} - \eta_{Q'})(z) \varphi_j(z,w)\,d\sigma(z) &+\frac{1}{\sigma(Q')}  \int_E  \Theta1(Y) \eta_{Q'}(z) \varphi_j(z,w)\,d\sigma(z)\\
&=:I^{1,1}_j(w,Y)+I^{1,2}_j(w,Y).
\end{align*}
Since $\int_E \varphi_j(z,w)d\sigma(z)=0$ and $\varphi_j(z,w)=0$ for $|z-w|>C2^{-j}$, by the mean value theorem,
\begin{align*} | I^{1,2}_j(w) |&=\left| \frac{1}{\sigma(Q')}\int_E  \Theta1(Y) (\eta_{Q'}(z)-\eta_{Q'}(w))\varphi_j(z,w) \,d\sigma(z) \right|\\
&\lesssim\frac{2^{(j+k)/2}}{\sigma(Q')}\textbf{1}_{\widetilde{Q}'}(w) \int_E |z-w| |\varphi_j(z,w)| \,d\sigma(z) \\
&\lesssim\frac{2^{(-j+k)/2}}{\sigma(Q')}\textbf{1}_{\widetilde{Q}'}(w),
\end{align*}
\noindent where $\widetilde{Q}'$ is a ``fattened version" of $Q'$ in E.
Therefore, using that $\sigma(Q')\approx\sigma(\widetilde{Q}')$,
$$ \left|\int_E f_j(w) I^{1,2}_j(w) \,d\sigma(w) \right| \lesssim 2^{(-j+k)/2} \mathcal{M}(f_j)(x).$$

On the other hand, by Fubini and the size estimates of $Q'\!\setminus\! R$,
\begin{align*} \left|\int_E f_j(w) I^{1,1}_j(w) \,d\sigma(w) \right| &\lesssim 
\frac{1}{\sigma(Q')}\int_E \int_{Q' \setminus R} |\varphi_j(z,w)| |f_j(w)|\,d\sigma(z)\,d\sigma(w)\\
&\lesssim\frac{1}{\sigma(Q')} \int_{Q' \setminus R} \int_E \widetilde{\omega}_j(z,w)|f_j(w)|\,d\sigma(w)\,d\sigma(z)\\
&\lesssim \frac{1}{\sigma(Q')} \int_{Q' \setminus R} \mathcal{M}(f_j)(z)\,d\sigma(z),
\end{align*}
which by H\"{o}lder's inequality for $1<r<p\leq2$ is bounded by
$$2^{(-j+k)/2r'}  \Big( \mathcal{M} \left(\mathcal{M}(f_j)\right)^r\Big)^{1/r}(x),$$
where $r'$ is the H\"{o}lder conjugate of $r$. Putting everything together we can conclude that there exists $\beta_0\geq \min\{\frac{1}{2r'},\frac{1}{2},\frac{\alpha}{2},\frac{\epsilon}{2}\}>0$ such that
\begin{equation*}
\left|\int_E f_j(w) T_j(w) \,d\sigma(w) \right| \lesssim 2^{-\beta_0|k-j|} \left(  \mathcal{M}(f_j) + \mathcal{M} \left(\mathcal{M}(f_j)\right) + \Big(\mathcal{M} \left(\mathcal{M}(f_j)\right)^r\Big)^{1/r} \right)(x),
\end{equation*}
\noindent uniformly for every $x\in Q'$.

Notice that the right part of the last inequality is controlled by $C 2^{-\beta_0|k-j|}  \Big(\mathcal{M} \left(\mathcal{M}(f_j)\right)^r\Big)^{1/r}(x)$, and thus, the left part of inequality \eqref{c3.Cond.Th=>Cond.Lemma.Good.Sq.F.Est.Rf} is

\begin{align*}
& \lesssim\int_Q\left(\sum_{k}\sum_{Q'\in\mathbb{D}_k}\textbf{1}_{Q'}(x)\iint_{\mathcal{U}_{Q'}}\left(\sum_j2^{-\beta_0|k-j|} \Big(\mathcal{M} \left(\mathcal{M}(f_j)\right)^r\Big)^{1/r}(x) \right)^2\frac{dY}{\delta(Y)^{n+1}}\right)^{p/2}\!\!\!\!\! d\sigma(x)\\
&\lesssim\int_Q\left(\displaystyle\sum_{k}\sum_{Q'\in\mathbb{D}_k}\textbf{1}_{Q'}(x) \left(\sum_j2^{-\beta_0|k-j|} \Big(\mathcal{M} \left(\mathcal{M}(f_j)\right)^r\Big)^{1/r}(x)\right)^2\right)^{p/2}d\sigma(x)\\
&\lesssim\int_Q\left(\sum_k\sum_{Q'\in\mathbb{D}_k}\textbf{1}_{Q'}(x)\sum_j 2^{-\beta_0|k-j|}\Big(\mathcal{M} \left(\mathcal{M}(f_j)\right)^r\Big)^{2/r}(x)\right)^{p/2}d\sigma(x)\\
&\approx\int_Q\left(\sum_k\sum_j 2^{-\beta_0|k-j|}\Big(\mathcal{M} \left(\mathcal{M}(f_j)\right)^r\Big)^{2/r}(x)\right)^{p/2}d\sigma(x)\\
&\approx\int_Q\left(\sum_j\Big(\mathcal{M} \left(\mathcal{M}(f_j)\right)^r\Big)^{2/r}(x)\right)^{p/2}d\sigma(x)
\end{align*}

At this point we shall make use of the inequalities for the Hardy-Littlewood maximal operator that were proved in \cite{FS} (the techniques to prove the result also apply to our setting) or in a more general context in \cite{GCRDF}, which in conjunction with \eqref{bound.tilde.Dj} and \eqref{main.theorem.eq2} allows to obtain our result.

\begin{align*}
&\int_Q\left(\sum_j\Big(\mathcal{M} \left(\mathcal{M}(f_j)\right)^r\Big)^{2/r}(x)\right)^{p/2}d\sigma(x) \lesssim \int_E \left(\sum_j\left(\mathcal{M}(f_j)\right)^2(x)\right)^{p/2}d\sigma(x)\\
&\lesssim\int_E\left(\sum_j|f_j|^2(x)\right)^{p/2}d\sigma(x)=\int_E\left(\sum_j|\widetilde{D}_jb_Q(x)|^2\right)^{p/2}d\sigma(x)\\
&\lesssim\int_E|b_Q(x)|^pd\sigma(x) \lesssim \sigma(Q),\end{align*}
\noindent where in the last step we have used \eqref{bound.tilde.Dj}. This proves \eqref{c3.Cond.Th=>Cond.Lemma.Good.Sq.F.Est.Rf} and consequently \eqref{c3.theorem.eq2}.
\\

It only remains to prove our earlier claim and show that there exists $\eta_{Q'}$ with the aforementioned properties. For, we cover $Q'$ by non-overlapping dyadic cubes $Q'' \in \mathbb{D}_m(Q')$ where $2^{-m} \approx 2^{-(k+j)/2}$ and we remove those that satisfy the condition $dist(Q'', E\setminus\! Q') \leq (C_1)^2 2^{-m}$. We shall say that the remaining ones are in the collection $\widetilde{\mathbb{D}}_m(Q')$. We choose $C_1$ large enough depending on the constant $\alpha_0>0$ defined in Lemma \ref{Not.Dyad.Cubes} so that 

\begin{align*}
Q'' &\subset B(x_{Q''}, \alpha_0 \ell(Q'')) \cap E\\
&\subset B(x_{Q''}, 2\alpha_0 \ell(Q'')) \cap E=:B_{Q''} \cap E\\
& = \Delta(x_{Q''}, 2\alpha_0 \ell(Q''))=: \Delta_{Q''}
\end{align*}

\noindent and also $dist(\Delta_{Q''}, E\setminus\! Q') \geq C_1 2^{-m}$ and $dist(B_{Q''}, E\setminus\! Q') \geq C_1 2^{-m}$ for all $Q''\in\widetilde{\mathbb{D}}_m(Q')$. We now take a partition of unity $\left\{\Phi_{Q''}\right\}$ subordinate to the family of the surface balls $\{ \Delta_{Q''}\}_{Q'' \in \mathbb{D}_m(Q')}$ (i.e., we take $\left\{\Phi_{Q''}\right\}$ subordinate to the family of balls $\{B_{Q''}\}_{Q'' \in \mathbb{D}_m(Q')}$ and then we restrict the domain to E) so that if $R := \left(\overline{ \bigcup_{Q'' \in \widetilde{\mathbb{D}}_m(Q')} \Delta_{Q''} }\right)$ then
$$\textbf{1}_{R}\leq \sum_{Q'' \in \mathbb{D}_m(Q')} \Phi_{Q''} \,\, \text{and} \,\, \sum_{Q'' \in \mathbb{D}_m(Q')} \Phi_{Q''} =1 \,\, \text{on} \,\, R,$$
and also,
$$\Phi_{Q''} \in \mathcal{C}^\infty_0(B_{Q''})\,\, \text{and} \,\, \|\nabla\Phi_{Q''}\|_{L^\infty} \lesssim \frac{1}{\ell(Q'')}.$$
If we define $$\eta_{Q'}:= \sum_{Q'' \in \widetilde{\mathbb{D}}_m(Q')}\Phi_{Q''},$$ then it is not hard to check that it satisfies the required properties.

\subsection{Proof of Lemma \ref{c3.lemma} for $p=2$}

Let us define
\begin{equation}\label{K(epsilon).p=2}
K(\varepsilon):= \sup_{Q}\frac{1}{\sigma(Q)}\int_{Q} \iint_{\Gamma_{Q,\varepsilon}(x)}|\Theta1(Y)|^2\,\frac{dY}{\delta(Y)^{n+1}}\,d\sigma(x)
\end{equation}
where
\begin{equation*}
\Gamma_{Q,\varepsilon}(x):=\bigcup_{\substack{Q'\ni x \\ Q'\subseteq Q \\ \varepsilon<\ell(Q')<\frac{1}{\varepsilon}}} \mathcal{U}_{Q'}\,\,\,\,\,\text{and}\,\,\,\,\, \gamma_{Q,\varepsilon}(x):=\bigcup_{\substack{Q'\ni x \\ Q'\in Good(Q) \\ \varepsilon<\ell(Q')<\frac{1}{\varepsilon}}}\mathcal{U}_{Q'}.
\end{equation*}

We fix a cube $Q$ and by splitting the domain of integration in order to exploit the size estimates of ${Q_k}$ and the square function bounds over the complement of a discretized ``sawtooth" region related to the family $\{Q_k\}$ we have that
\begin{align*}
\int_Q\iint_{\Gamma_{Q,\varepsilon}(x)}|\Theta1(Y)|^2\frac{dY}{\delta(Y)^{n+1}}d\sigma(x)
&\leq\sum_k\int_{Q_k}\iint_{\Gamma_{Q,\varepsilon}(x)}|\Theta1(Y)|^2\frac{dY}{\delta(Y)^{n+1}}d\sigma(x)\\
&+\int_{Q\setminus\bigcup Q_k}\iint_{\Gamma_{Q,\varepsilon}(x)}|\Theta1(Y)|^2\frac{dY}{\delta(Y)^{n+1}}d\sigma(x)\\
&=:\sum_k \I_k + \II.
\end{align*}

One may notice that for every $x\in Q\setminus\bigcup_kQ_k$, $\Gamma_Q(x)=\gamma_Q(x)$ and $\Gamma_{Q,\varepsilon}(x)=\gamma_{Q,\varepsilon}(x)$.

Therefore,
\begin{align}
|\II| &= \int_{Q\setminus\bigcup Q_k}\iint_{\gamma_{Q,\varepsilon}(x)}|\Theta1(Y)|^2\frac{dY}{\delta(Y)^{n+1}}d\sigma(x) \notag\\
&\leq\int_Q\int_{\gamma_Q(x)}|\Theta1(Y)|^2\frac{dY}{\delta(Y)^{n+1}}d\sigma(x) \notag \\
&\lesssim \sigma(Q),\label{c3.proof.of.lemma.II}
\end{align}
where in the last inequality we used \eqref{c3.theorem.eq2}. For the bound of $\I_k$ we fix a cube $Q_k$ and then observe that if $x\in Q_k$ then $\Gamma_{Q,\varepsilon}(x) \subseteq \Gamma_{Q_k,\varepsilon}(x) \cup \gamma_{Q,\varepsilon}(x)$, which in turn lets us have
\begin{align*}
\I_k &\leq \int_{Q_k}\displaystyle\iint_{\Gamma_{{Q_k},\varepsilon}}|\Theta1(Y)|^2\frac{dY}{\delta(Y)^{n+1}}d\sigma(x)+\int_{Q_k}\iint_{\gamma_{Q,\varepsilon}(x)}|\Theta1(Y)|^2\frac{dY}{\delta(Y)^{n+1}}d\sigma(x)\\
&=:\I_k^1+\I_k^2.
\end{align*}
Summing over all $k$'s, in light of the definition of $K(\varepsilon)$ and \eqref{c3.theorem.eq1}, we have that
\begin{equation}\label{c3.proof.of.lemma.I_k1}
\sum_k \I_k^1  \leq \sum_k \sigma(Q_k) K(\varepsilon) \leq (1-\eta)K(\varepsilon) \sigma(Q).
\end{equation}

Moreover, by our hypothesis \eqref{c3.theorem.eq2} and the fact that $\gamma_{Q,\varepsilon}(x)\subset\gamma_Q(x)$,
\begin{align} \sum_k \I_k^2 &= \int_{\cup_k  Q_k}\iint_{\gamma_{Q,\varepsilon}(x)}|\Theta1(Y)|^2\frac{dY}{\delta(Y)^{n+1}}d\sigma(x) \notag\\
&\leq \int_Q \iint_{\gamma_{Q}(x)}|\Theta1(Y)|^2\frac{dY}{\delta(Y)^{n+1}}d\sigma(x) \notag\\
&\lesssim \sigma(Q).\label{c3.proof.of.lemma.I_k2}
\end{align}

Therefore, by \eqref{c3.proof.of.lemma.II}, \eqref{c3.proof.of.lemma.I_k1} and \eqref{c3.proof.of.lemma.I_k2} we get
\begin{equation*}
\int_Q\iint_{\Gamma_{Q,\varepsilon}(x)}|\Theta1(Y)|^2\frac{dY}{\delta(Y)^{n+1}}d\sigma(x) \leq C \sigma(Q)+(1-\eta)K(\varepsilon) \sigma(Q),
\end{equation*}
for some constant $C>0$ and after we divide both sides by $\sigma(Q)$ and take the supremum over $Q$ we finally have that 
$$K(\varepsilon) \leq C + (1-\eta) K(\varepsilon).$$
By truncation, for each fixed $\varepsilon>0$, $K(\varepsilon)<\infty$, which in turn shows that,
$$ K(\varepsilon) \leq C/\eta,$$
uniformly in $\varepsilon$, by observing that the constants do not depend on $\varepsilon$. To this end, by letting $\varepsilon\to 0$ we obtain
$$\sup_Q\frac{1}{\sigma(Q)}\int_Q\iint_{\Gamma_{Q}(x)}|\Theta1(Y)|^2\frac{dY}{\delta(Y)^{n+1}}d\sigma(x)\leq C,$$
and conclude Lemma \ref{c3.lemma} for the special case $p=2$.

We now turn to the proof of Lemma \ref{c3.lemma} in full generality.

\subsection{Proof of Lemma \ref{3.1} for any $p \in (1,\infty)$} By H\"{o}lder's inequality and an argument of Fefferman and Stein \cite[p.146-147]{FS2} (since $b_Q$ need not be compactly supported) the case $p>2$ can be reduced to the case $p=2$ which was handled before. Therefore, we only need to prove Theorem \ref{main.theorem}, for $p$ ranging in the open interval $(1,2)$. To do so, we state and prove Lemma \ref{c4.sublemma} and then show that the conditions of Lemma \ref{c3.lemma} imply the conditions of Lemma \ref{c4.sublemma}. Let us fix $p\in(1,2)$ for the remainder of this section.

\begin{lemma}\label{c4.sublemma}

If there exist a positive and finite constant $N$ and a constant $0<\beta<1$, so that for every dyadic cube $Q$,
\begin{equation}\label{c4.sublemma.hypoth.}
\sigma\left(\left\{x\in Q: g_Q(x)>N \right\}\right)\leq (1-\beta)\,\sigma(Q),
\end{equation}
where 
\begin{equation*}
g_Q(x):=\left(\iint_{\Gamma_Q(x)}|\Theta 1(Y)|^2\frac{dY}{\delta(Y)^{n+1}}\right)^{p/2},
\end{equation*}
then \eqref{T1.Carleson.measure.cond.} holds.

\end{lemma}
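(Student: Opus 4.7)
The plan is to carry out a self-improvement / stopping-time iteration, parallel in spirit to the $p=2$ proof of Lemma \ref{c3.lemma} but driven directly by the weak distributional hypothesis \eqref{c4.sublemma.hypoth.}. I introduce
$$K(\varepsilon):=\sup_Q\frac{1}{\sigma(Q)}\int_Q\iint_{\Gamma_{Q,\varepsilon}(x)}|\Theta 1(Y)|^2\frac{dY}{\delta(Y)^{n+1}}d\sigma(x),$$
which is finite for every $\varepsilon>0$ by truncation. The goal is a bound uniform in $\varepsilon$; letting $\varepsilon\to 0$ then yields \eqref{T1.Carleson.measure.cond.}.

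Fix $Q$, put $E_Q:=\{x\in Q:g_Q(x)>N\}$ so that $\sigma(E_Q)\leq(1-\beta)\sigma(Q)$, and select $\{Q_k\}$ as the maximal dyadic subcubes of $Q$ satisfying the stopping condition $\sigma(Q_k\cap E_Q)>\tau\sigma(Q_k)$ with some fixed $\tau\in(1-\beta,1)$. Three facts will be needed. First, $\tau\sum_k\sigma(Q_k)<\sigma(E_Q)\leq(1-\beta)\sigma(Q)$ gives $\sum_k\sigma(Q_k)\leq(1-\eta)\sigma(Q)$ for some $\eta>0$ depending only on $\beta,\tau$. Second, dyadic Lebesgue differentiation gives $g_Q(x)\leq N$ for $\sigma$-a.e.\ $x\in Q\setminus\bigcup_k Q_k$. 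Third, since the dyadic parent $\hat{Q}_k$ of each $Q_k$ fails the stopping condition, the same inequality applied inside $\hat{Q}_k$ yields
$$\tau\sum_{k':\,Q_{k'}\subseteq\hat{Q}_k}\sigma(Q_{k'})<\sigma(E_Q\cap\hat{Q}_k)\leq\tau\sigma(\hat{Q}_k),$$
so $\hat{Q}_k\setminus\bigcup_{k'}Q_{k'}$ has positive $\sigma$-measure; pick an ``anchor'' point $y_k$ in this set, which automatically satisfies $g_Q(y_k)\leq N$.

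Decompose $\int_Q\iint_{\Gamma_{Q,\varepsilon}(x)}=\int_{Q\setminus\bigcup_k Q_k}+\sum_k\int_{Q_k}$. The first piece is bounded by $N^{2/p}\sigma(Q)$ via the pointwise estimate $\iint_{\Gamma_{Q,\varepsilon}(x)}\leq g_Q(x)^{2/p}\leq N^{2/p}$. For the second, the inclusion $\Gamma_{Q,\varepsilon}(x)\subseteq\Gamma_{Q_k,\varepsilon}(x)\cup A_k$ for $x\in Q_k$, where $A_k:=\bigcup_{Q_k\subsetneq Q'\subseteq Q}\mathcal{U}_{Q'}$ is independent of $x\in Q_k$, splits the integral further. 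The $\Gamma_{Q_k}$-part summed over $k$ is at most $K(\varepsilon)\sum_k\sigma(Q_k)\leq(1-\eta)K(\varepsilon)\sigma(Q)$ by the definition of $K(\varepsilon)$. The crux is the $A_k$-part: since every $Q'$ in the union defining $A_k$ satisfies $Q'\supseteq\hat{Q}_k\ni y_k$ by dyadic nesting, one has $A_k\subseteq\Gamma_Q(y_k)$, whence $\iint_{A_k}|\Theta 1|^2\delta^{-n-1}dY\leq g_Q(y_k)^{2/p}\leq N^{2/p}$, and so $\sum_k\sigma(Q_k)\iint_{A_k}\leq N^{2/p}(1-\eta)\sigma(Q)$. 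Collecting terms, dividing by $\sigma(Q)$, and taking the supremum gives $K(\varepsilon)\leq 2N^{2/p}+(1-\eta)K(\varepsilon)$, hence $K(\varepsilon)\leq 2N^{2/p}/\eta$, uniformly in $\varepsilon$.

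The hard step is the collar bound $\iint_{A_k}\lesssim 1$: a naive approach (e.g., $\sum_{Q_{k'}\subsetneq Q'}\sigma(Q_{k'})\leq\sigma(Q')$) double-counts across scales and produces an extra $K(\varepsilon)\sigma(Q)$ term which prevents the recursion from closing. The anchoring of $\iint_{A_k}$ to the pointwise value of $g_Q$ at a good point $y_k$ in the parent cube is what saves the argument, and explains the necessity of choosing the stopping threshold $\tau$ strictly greater than $1-\beta$.
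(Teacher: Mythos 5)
Your proof is correct, and it takes a genuinely different route from the paper's. The paper defines $D_{N,\varepsilon}=\{g_{Q,\varepsilon}>N\}$, appeals to outer regularity of $\sigma$ to pick a relatively open $\mathcal{O}_{N,\varepsilon}\supseteq D_{N,\varepsilon}$ with $\sigma(\mathcal{O}_{N,\varepsilon})<(1-\beta/2)\sigma(Q)$, and then decomposes $\mathcal{O}_{N,\varepsilon}$ into maximal dyadic cubes $\{Q_j\}$; the anchor point $x_j$ lives in $Q_j^*\cap F_{N,\varepsilon}$, which is nonempty precisely because $Q_j$ is maximal inside an open set — so $g_{Q,\varepsilon}(x_j)\leq N$ holds everywhere on $F_{N,\varepsilon}$, not just a.e. You instead run a Calder\'on-Zygmund density stopping time on $\mathbf{1}_{E_Q}$ at level $\tau\in(1-\beta,1)$ and invoke the dyadic Lebesgue differentiation theorem to deduce $g_Q\leq N$ a.e.\ off $\bigcup_k Q_k$ and to locate the anchor $y_k\in\hat{Q}_k\setminus\bigcup_{k'}Q_{k'}$. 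What each buys: the paper's version avoids any a.e.\ argument and any free parameter $\tau$ (the constant $\beta/2$ comes for free from outer regularity), at the cost of invoking topological regularity of $\sigma$ and a Whitney-type dyadic decomposition of an open set; your version is purely measure-theoretic and makes the role of the threshold parameter ($\tau$ strictly above $1-\beta$ to close the recursion) explicit, but needs the differentiation theorem and the positive-measure argument in $\hat{Q}_k$ to guarantee a usable anchor. The structural heart — decomposing $\Gamma_{Q,\varepsilon}(x)$ for $x\in Q_k$ into the local cone $\Gamma_{Q_k,\varepsilon}(x)$ plus a collar $A_k$ of Whitney regions over strict ancestors of $Q_k$, and bounding the collar pointwise by $N^{2/p}$ via the anchor in the dyadic parent — is the same in both arguments, and you correctly identify that naively summing the collar contribution over scales would reintroduce an uncontrolled $K(\varepsilon)$ term.
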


\begin{proof}
Fix a dyadic cube Q and let us define 
$$g_{Q,\varepsilon}(x):=\left(\iint_{\Gamma_{Q,\varepsilon}(x)}|\Theta1(Y)|^2\frac{dY}{\delta(Y)^{n+1}}\right)^{p/2},$$
where $\Gamma_{Q,\varepsilon}(x)$ is as in \eqref{K(epsilon).p=2} and  set $D_{N,\varepsilon}:=\left\{ x\in Q: g_{Q,\varepsilon}(x)>N\right\}$. Since $\sigma$ is an outer regular Borel measure, we may choose $\mathcal{O}_{N,\epsilon}$, relatively open in Q, such that 
$$ D_{N,\epsilon} \subseteq \mathcal{O}_{N,\epsilon} \quad \text{and} \quad \sigma(\mathcal{O}_{N,\epsilon})<(1-\frac{\beta}{2})\sigma(Q).$$ 
We shall prove that if
\begin{equation*}
K(\varepsilon):=\sup_Q\frac{1}{|Q|}\int_Q g_{Q,\varepsilon}^{2/p}(x)d\sigma(x),
\end{equation*} 
then there exists a $C>0$ such that $K(\varepsilon)<C$, uniformly in $\varepsilon$. 

For, since $\mathcal{O}_{N,\varepsilon}$ is relatively open in Q we can decompose it via a stopping time argument to find a disjoint family of dyadic cubes $\{Q_j\}_j$ so that $\mathcal{O}_{N,\varepsilon} =\displaystyle\cup_j Q_j$, maximal with respect to inclusion in $\mathcal{O}_{N,\epsilon}$. If we set $Q\setminus\mathcal{O}_{N,\varepsilon}:=F_{N,\varepsilon}$, we have that
\begin{align*}
\int_Q g_{Q,\varepsilon}^{2/p}(x)d\sigma(x) &=\int_{F_{N,\varepsilon}}g_{Q,\varepsilon}^{2/p}(x)d\sigma(x)+\int_{\mathcal{O}_{N,\varepsilon}}g_{Q,\varepsilon}^{2/p}(x)d\sigma(x)\\
&\leq N^{2/p}\sigma(Q)+\int_{\mathcal{O}_{N,\varepsilon}}g_{Q,\varepsilon}^{2/p}(x)d\sigma(x).
\end{align*}
It is not hard to see that
\begin{align}
\int_{\mathcal{O}_{N,\varepsilon}}g_{Q,\varepsilon}^{2/p}(x)d\sigma(x) & \leq\displaystyle\sum_j\int_{Q_j}g_{Q_j,\varepsilon}^{2/p}(x)d\sigma(x)+\displaystyle\sum_j\int_{Q_j}\iint_{\gamma_{Q,\varepsilon}(x)}|\Theta1(Y)|^2\frac{dY}{\delta(Y)^{n+1}}d \sigma(x) \notag\\&=:I+II \label{c4.gQ.split}.
\end{align}
In view of \eqref{c4.sublemma.hypoth.}, since $Q_j$'s are mutually disjoint, 
\begin{align} 
I &=\sum_j \sigma(Q_j)\frac{1}{\sigma(Q_j)}\int_{Q_j}g_{Q_j,\varepsilon}^{2/p}(x)dx \notag\\
&\leq\sum_j \sigma(Q_j)K(\varepsilon) =  \sigma(\mathcal{O}_{N,\varepsilon})K(\varepsilon) \notag\\
&\leq \left(1-\frac{\beta}{2}\right) \sigma(Q) K(\varepsilon) \label{c4.sublemma.I}.
\end{align}
For the bound of $II$ it suffices to make the following observation. If we fix a cube $Q_j$ and $x\in Q_j$ then by the definition of our cones and the maximality of $Q_j$'s, there exists a point $x_j$ in the dyadic father of $Q_j$, say $Q_j^*$, such that $x_j\in F_{N,\varepsilon}$. Therefore, since $\gamma_{Q,\varepsilon}(x) \subseteq \gamma_{Q,\varepsilon}(x_j)$, we have that $g_{Q,\varepsilon}(x) \leq g_{Q,\varepsilon}(x_j)$, which in turn, entails

\begin{align} II  \leq \sum_j\int_{Q_j}\iint_{\gamma_{Q,\varepsilon}(x_j)}|\Theta1(Y)|^2\frac{dY}{\delta(Y)^{n+1}}d\sigma(x) &=\sum_j \sigma(Q_j) \left(g_{Q,\varepsilon}(x_j)\right)^{2/p} \notag\\ 
&\leq N^{2/p} \left(1-\frac{\beta}{2}\right)\sigma(Q).\label{c4.sublemma.II}\end{align}

In light of \eqref{c4.gQ.split}, \eqref{c4.sublemma.I} and \eqref{c4.sublemma.II},
\begin{equation*}\frac{1}{\sigma(Q)} \int_Q g_{Q,\varepsilon}^{2/p}(x)d\sigma(x) \leq (2-\beta) N^{2/p}+(1-\beta)K(\varepsilon), \end{equation*}
which after taking the supremum over all cubes $Q$, since $K(\varepsilon)<\infty$, we obtain
$$K(\varepsilon) \leq \frac{2-\beta}{\beta} N^{2/p}.$$
If we let $\varepsilon \searrow 0$, we conclude \eqref{T1.Carleson.measure.cond.}.
\end{proof}

We shall show now that the conditions of Lemma \ref{c3.lemma} imply the conditions of Lemma \ref{c4.sublemma}. Indeed. let us denote $D_N:=\{x\in Q:g_Q(x)>N\}$ for a large $N>0$ to be chosen. It is enough to prove that there exists $0<\beta<1$ such that
\begin{equation}\label{c4.eq.Cond.Lemma=>Cond.Sub}
\sigma(D_N) \leq (1-\beta)\sigma(Q).
\end{equation}
To this end,
\begin{align*}\sigma(D_N)  & \leq \sum_k \sigma(Q_k) +\sigma\left(\{x\in Q\setminus \bigcup Q_k:g_Q(x)>N\} \right)\\
&\leq (1-\eta) \sigma(Q) + \frac{C}{N} \int_{Q\setminus\bigcup Q_k} g_Q(x)d\sigma(x) \\
&\leq (1-\eta) \sigma(Q) + \frac{C}{N} \int_{Q\setminus\bigcup Q_k} \left(\iint_{\Gamma_Q(x)}|\Theta1(Y)|^2\frac{dY}{\delta(Y)^{n+1}}\right)^{p/2}d\sigma(x) \\
& =(1-\eta)\sigma(Q)+\frac{C}{N}\int_{Q\setminus\bigcup Q_k}\left(\iint_{\gamma_Q(x)}|\Theta1(Y)|^2\frac{dY}{\delta(Y)^{n+1}}\right)^{p/2}d\sigma(x) \\
& \leq (1-\eta)\sigma(Q)+\frac{C}{N}\int_{Q}\left(\iint_{\gamma_Q(x)}|\Theta1(Y)|^2\frac{dY}{\delta(Y)^{n+1}}\right)^{p/2}d\sigma(x) \\
& \leq (1-\eta)\sigma(Q) +\frac{C}{N}\sigma(Q),\end{align*}
where we have used \eqref{c3.theorem.eq2} and the fact that for any $x\in Q\setminus\bigcup Q_k$, $\Gamma_Q(x)=\gamma_Q(x)$. Now, if we have choose $N>0$ large enough so that $C/N\leq \eta/2$ and denote $\beta=\eta/2$,  \eqref{c4.eq.Cond.Lemma=>Cond.Sub} is concluded and so is Lemma \ref{c3.lemma}.

\subsection{Conclusion of Theorem \ref{main.theorem}}
After proving Lemma \ref{c3.lemma}, we are ready to deduce the conclusion of Theorem \ref{main.theorem} once we show a $T1$ theorem for square functions, i.e., it is enough to prove the following extension of a result of Christ and Journ\'e \cite{CJ} in the Euclidean case (which is exactly the case that $E=\mathbb{R}^n$ and $\Omega=\mathbb{R}_{+}^{n+1}$ or $\Omega=\mathbb{R}_-^{n+1})$.

\begin{theorem}[$T1$ Theorem]\label{T1theorem}
Let $\Theta f$ be defined as in \eqref{Theta} where $\psi(X,y)$ satisfies the conditions \eqref{psi.ptwise.estimate} and \eqref{psi.holder.cont}. If
\begin{equation}\label{T1.Carleson.measure.cond.}
\sup_Q \frac{1}{\sigma(Q)}\int_Q \iint_{\Gamma_Q(x)}\left|\Theta 1(Y) \right|^2 \frac{dY}{\delta(Y)^{n+1}}d\sigma(x) \lesssim 1,
\end{equation}
then
\begin{equation}
\iint_{\Omega}|\Theta f(Y)|^2\frac{dY}{\delta(Y)}\lesssim \int_E |f(x)|^2 d\sigma(x).
\end{equation}
\end{theorem}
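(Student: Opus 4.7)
The strategy for proving Theorem \ref{T1theorem} is to adapt the Christ--Journ\'e $T1$ theorem for square functions to the discrete dyadic setting on ADR boundaries, via the Coifman--Meyer decomposition. First, since the Whitney regions $\{\mathcal{U}_Q\}_{Q\in\mathbb{D}(E)}$ cover $\Omega$ with bounded overlap and $\delta(Y)\approx\ell(Q)$ on $\mathcal{U}_Q$, it suffices to prove
$$\sum_{Q\in\mathbb{D}(E)}\iint_{\mathcal{U}_Q}|\Theta f(Y)|^2\,\frac{dY}{\delta(Y)}\lesssim \|f\|_{L^2(E)}^2.$$
Also, at the exponent $p=2$, Fubini's theorem applied to the nested cones $\Gamma_Q(x)$ shows that \eqref{T1.Carleson.measure.cond.} is equivalent to the tent-based Carleson measure estimate $\iint_{\mathcal{T}_Q}|\Theta 1(Y)|^2\,dY/\delta(Y)\lesssim\sigma(Q)$, where $\mathcal{T}_Q:=\bigcup_{Q'\subseteq Q}\mathcal{U}_{Q'}$.

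Next, I would apply the discrete Calder\'on reproducing formula \eqref{C-Z.reproducing} to write $f=\sum_j D_j f_j$ with $f_j:=\widetilde{D}_j f$. For $Y\in\Omega$, let $Q(Y)$ denote the dyadic cube such that $Y\in\mathcal{U}_{Q(Y)}$, and perform the Coifman--Meyer splitting
$$\Theta D_j f_j(Y) = R_j(Y)+\Theta 1(Y)\,\mathbb{E}_{Q(Y)}(D_j f_j),$$
where $R_j(Y):=\Theta D_j f_j(Y)-\Theta 1(Y)\,\mathbb{E}_{Q(Y)}(D_j f_j)$. Summing in $j$ yields $\Theta f(Y)=\sum_j R_j(Y)+\Theta 1(Y)\,\mathbb{E}_{Q(Y)} f$. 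The \emph{Carleson part} $\Theta 1(Y)\,\mathbb{E}_{Q(Y)}f$ is then handled by Carleson's embedding theorem: since $|\mathbb{E}_{Q(Y)}f|\leq\mathcal{M}f(x)$ for any $x\in Q(Y)$, and since by the equivalent form of the hypothesis $|\Theta 1(Y)|^2\,dY/\delta(Y)$ is a Carleson measure,
$$\iint_\Omega|\Theta 1(Y)|^2\,|\mathbb{E}_{Q(Y)}f|^2\,\frac{dY}{\delta(Y)}\lesssim \|\mathcal{M}f\|_{L^2(E)}^2\lesssim \|f\|_{L^2(E)}^2.$$

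For the \emph{regular part} $R_j(Y)$, I would run the kernel computation of subsection \ref{3.1} verbatim: that argument relies only on the ADR structure, on the pointwise and H\"older bounds for $\psi$ and $\varphi_j$, and on the smoothing bump $\eta_{Q'}$ constructed there, so it transfers directly when $\widetilde{D}_j f$ replaces $\widetilde{D}_j b_Q$. The outcome is the pointwise bound
$$|R_j(Y)|\lesssim 2^{-\beta_0|k-j|}\,\bigl(\mathcal{M}\bigl(\mathcal{M}(f_j)^r\bigr)\bigr)^{1/r}(x),\quad Y\in\mathcal{U}_Q,\ \ell(Q)\approx 2^{-k},\ x\in Q,$$
for some $\beta_0>0$ and $r\in(1,2)$. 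Substituting this estimate, using $\iint_{\mathcal{U}_Q}dY/\delta(Y)\approx\sigma(Q)$, Schur's test to sum the off-diagonal decay in $(j,k)$, and finally the Fefferman--Stein vector-valued maximal inequality together with \eqref{bound.tilde.Dj} at $p=2$, we get
$$\sum_Q\iint_{\mathcal{U}_Q}\Bigl|\sum_j R_j(Y)\Bigr|^2\,\frac{dY}{\delta(Y)}\lesssim\int_E\sum_j|\widetilde{D}_j f|^2\,d\sigma\lesssim \|f\|_{L^2(E)}^2.$$

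The hardest step is the off-diagonal scale decay pointwise estimate for $R_j$, but as noted this is precisely the calculation already carried out in subsection \ref{3.1}; the rest of the argument is standard almost-orthogonality and Carleson book-keeping.
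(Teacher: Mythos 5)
Your proposal is essentially identical to the paper's proof: both reduce to Whitney boxes, perform the Coifman--Meyer decomposition $\Theta f = Rf + \Theta 1\,\mathbb{E}_{Q}f$, dispatch the Carleson term by the discrete Carleson embedding lemma applied to $\alpha_Q := \iint_{\mathcal{U}_Q}|\Theta 1(Y)|^2\,dY/\delta(Y)^{n+1}$, and bound the regular term by invoking the Calder\'on reproducing formula together with the off-diagonal kernel estimates already derived in subsection \ref{3.1}. The only cosmetic difference is that you apply the reproducing formula before the Coifman--Meyer split rather than after, which changes nothing.
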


\begin{proof}
It is not hard to see that if we decompose $\Omega$ using the ``Whitney boxes" $\mathcal{U}_Q$ associated with the ``dyadic grid" $\mathbb{D}$ and enlarge the area of integration, we have that
\begin{align*}
\iint_{\Omega}|\Theta f(Y)|^2\frac{dY}{\delta(Y)}&\lesssim \int_E\sum_k \sum_{\substack{Q\ni x \\ Q \in \mathbb{D}_k}} \iint_{\mathcal{U}_Q}|\Theta f(Y)|^2\frac{dY}{\delta(Y)^{n+1}}d\sigma(x).
\end{align*}

We shall use the Coifman-Meyer method and write
$$\Theta f= \left(\Theta f -\Theta1 \mathbb{E}_{Q}f\right)+ \Theta1  \mathbb{E}_{Q}f=: Rf+\Theta1 \mathbb{E}_{Q}f.$$

For the contribution of $Rf$ to the estimate, we introduce an average over the cube $Q$, $\frac{1}{\sigma(Q)}\int_Q d\sigma(x)$ and then apply Fubini and by the same method we used in the subsection \ref{3.1}, it can be easily seen that 
\begin{align*}\int_E\sum_k \sum_{\substack{Q\ni x \\ Q \in \mathbb{D}_k}}  \iint_{\mathcal{U}_Q}|Rf(Y)|^2\frac{dY}{\delta(Y)^{n+1}} d\sigma(x) &\leq \int_E \sum_k \sum_{Q \in \mathbb{D}_k} \textbf{1}_Q(x) \iint_{\mathcal{U}_Q}|R f(Y)|^2\frac{dY}{\delta(Y)^{n+1}}d\sigma(x)\\ &\lesssim \int_E |f(x)|^2 d\sigma(x).\end{align*}
Therefore, it is enough to prove that
\begin{equation}
\int_E\sum_k\sum_{Q \in \mathbb{D}_k}1_Q(x) \iint_{\mathcal{U}_Q}|\Theta1(Y)  \mathbb{E}_{Q}f|^2\frac{dY}{\delta(Y)^{n+1}}d\sigma(x) \lesssim \int_E \mathcal{M}(|f|)^2(x) d\sigma(x).
\label{T1.Embedding.bound}\end{equation}
For this, we shall make use of the discrete version of the Carleson's embedding lemma which is stated as follows.

\begin{lemma}(Discrete Carleson's embedding lemma)
If $\alpha_Q$ satisfy the Carleson condition 
\begin{equation} \sum_{Q\subseteq Q_0}\alpha_Q\sigma(Q)\lesssim \sigma(Q_0),\label{Carleson.measure}\end{equation}
for any cube $Q_0 \in \mathbb{D}$, then 
\begin{equation}\int_E\sum_{\substack{Q\ni x \\ Q \in \mathbb{D}}} \alpha_Q \left|\mathbb{E}_Q f \right|^2d\sigma(x) \lesssim \int_E \mathcal{M}(|f|)^2(x)d\sigma(x).\label{Carleson.conclusion}\end{equation}
\end{lemma}

To show \eqref{T1.Embedding.bound} let us set 
$$ \alpha_Q:=\iint_{\mathcal{U}_Q}|\Theta1(Y)|^2\frac{dY}{\delta(Y)^{n+1}}.$$
Therefore, by Fubini, 
\begin{align*}
\sum_{\substack{ Q \in \mathbb{D}\\Q \subseteq Q_0}}\alpha_Q \sigma(Q)
&= \int_E \textbf{1}_{Q_0}(x) \sum_{\substack{ Q \in \mathbb{D}\\{Q \subseteq Q_0}}} \textbf{1}_{Q}(x) \iint_{\mathcal{U}_Q}|\Theta1(Y)|^2\frac{dY}{\delta(Y)^{n+1}}\,d\sigma(x)\\
&= \int_{Q_0} \iint_{\Gamma_{Q_0}(x)}|\Theta1(Y)|^2\frac{dY}{\delta(Y)^{n+1}}\,d\sigma(x) \lesssim \sigma(Q_0),\end{align*}
where the last inequality follows from \eqref{T1.Carleson.measure.cond.}. So, $\alpha_Q$ satisfies \eqref{Carleson.measure} and by Carleson's embedding lemma, \eqref{T1.Embedding.bound} holds. This concludes the proof of Theorem \ref{T1theorem}.
\end{proof}

We now provide the reader with the proof of Carleson's lemma, although it is already known, in order to make our presentation more self-contained.
\begin{proof} (of the discrete Carleson's embedding lemma)
We may assume without loss of generality that $f\in L^2(E)$, given $\lambda>0$ there exists $r>0$ large enough, so that if $diam(Q)>r$ then 
$$\frac{1}{\sigma(Q)}\int_Q|f(x)|d\sigma(x)\leq \left(\frac{1}{\sigma(Q)}\int_Q |f(x)|^2d\sigma(x)\right)^{1/2} \leq \lambda.$$
If we perform a Calder\'on-Zygmund stopping time argument on each $Q$ we obtain a maximal family of cubes $\{Q_j\}_j$ with respect to the property $|\mathbb{E}_{Q_j}f|>\lambda$, and we have
$$\bigcup_j Q_j \subseteq \{x \in E : \mathcal{M}(|f|)(x)>\lambda\}.$$
To conclude the proof it is enough to show that if $\mathbb{A} \subseteq \mathbb{D}$ and $$\mu(\mathbb{A}):=\sum_{Q\in\mathbb{A}}\alpha_Q\sigma(Q),$$ then
\begin{equation}\label{Weak.Carleson}
\mu(\{Q: |\mathbb{E}_Qf|>\lambda\}) \leq C \|\mu\|_{\mathcal{C}}\, \sigma ( \{x\in E: \mathcal{M} (|f|) (x)>\lambda \} ),
\end{equation}
where $\| \cdot \|_\mathcal{C} $ stands for the Carleson norm of our measure which is defined by \eqref{Carleson.measure}. To this end, observe that 
$$\mathbb{A}_\lambda:= \{Q: |\mathbb{E}_Qf|>\lambda\} \subseteq \bigcup_j \mathcal{F}_j,$$
where $\mathcal{F}_j:= \{ Q \in \mathbb{A}_\lambda : Q \subseteq Q_j \}$. Therefore,
\begin{align*}
\mu(\mathbb{A}_\lambda)
&= \sum_j \sum_{\substack{Q\in\mathbb{A}_{\lambda} \\ Q \subseteq Q_j}} \alpha_Q\sigma(Q) \\
&\lesssim \sum_j \sigma(Q_j) = \sigma\big(\bigcup_j Q_j\big)\\
& \leq \sigma(\{x \in E : \mathcal{M}(|f|)(x)>\lambda\}).
\end{align*}
\end{proof}

\section{Theorem \ref{main.theorem} when $E$ is a bounded $ADR$ set} \label{sec.4}

In this section we prove Theorem \ref{main.theorem} in the case that the $ADR$ set $E$ is a bounded subset of $R^{n+1}$, which means that $diam(E)=:r_0<\infty$. Then, there exists an $n+1$-dimensional ball $B_{r_0}$ such that $E\subseteq B_{r_0}$. If we denote $B_{mr_0}$ to be the ball of radius $m r_0$ which is concentric with $B_{r_0}$, we have that

\begin{align*}
\iint_{\mathbb{R}^{n+1}\setminus E}\vert\Theta f(Y)\vert^2\frac{dY}{\delta(Y)} & = \iint_{B_{8r_0}\setminus E}\vert\Theta f(Y)\vert^2\frac{dY}{\delta(Y)}+\iint_{\mathbb{R}^{n+1}\setminus B_{8r_0}}\vert\Theta f(Y)\vert^2\frac{dY}{\delta(Y)}\\
& := I+II.
\end{align*}
Term $I$ may be handled by our previous arguments when $E$ is unbounded and the verification is left to the reader. Hence, to conclude the theorem, we only need to handle $II$. To this end,
\begin{align*}
II & \lesssim \sum_{k=3}^{\infty}\iint_{B_{2^{k+1}r_0}\setminus B_{2^k r_0}}\left(\int_E\frac{\delta(X)^{\alpha}}{|X-y|^{n+\alpha}}|f(y)|d\sigma(y)\right)^2\frac{dX}{\delta(X)}\\
&\approx\sum_{k=3}^{\infty}\frac{1}{(2^kr_0)^{2n}}\frac{1}{2^kr_0}(2^kr_0)^{n+1}\left(\int_E f(y)d\sigma(y)\right)^2\\
&\lesssim \sum_{k=3}^{\infty}2^{-kn}r_0^{-n}\sigma(E)\Vert f\Vert^2_{L^2(E)}\lesssim \Vert f\Vert_{L^2(E)}^2.
\end{align*}

\end{document}